\newtheorem{theorem}{Theorem}[section]
\newtheorem{proposition}[theorem]{Proposition}
\newtheorem{lemma}[theorem]{Lemma}
\newtheorem{remark}[theorem]{Remark}
\newtheorem{example}[theorem]{Example}
\newcommand{\Diff}{\mathrm{Diff}}
\newcommand{\tb}{\mathrm{tb}}
\renewcommand{\sl}{\mathrm{sl}}
\newcommand{\std}{\mathrm{std}}
\newcommand{\rot}{\mathrm{rot}}
\newcommand{\w}{\mathrm{w}}
\newcommand{\HFK}{\mathrm{HFK}}
\newcommand{\id}{\mathrm{id}}
\newcommand{\Sat}{\mathrm{Sat}}
\begin{document}

\title[] {Legendrian Non-Simple Two-Bridge Knots}

\author[Vikt\'oria F\"oldv\'ari]{Vikt\'oria F\"oldv\'ari}

\address{Vikt\'oria F\"oldv\'ari,
Institute of Mathematics, E\"otv\"os Lor\'and University, Budapest, Hungary
}
\email{foldvari@cs.elte.hu}
                                 
\keywords{Legendrian knots, transverse knots, two-bridge knots, knot Floer homology}

                                                                                
\begin{abstract}
  By examining knot Floer homology, we extend a result of Ozsváth and Stipsicz and show further infinitely many Legendrian and transversely non-simple knot types among two-bridge knots. We give sufficient conditions of Legendrian and transverse non-simplicity on the continued fraction expansion of the corresponding rational number.
\end{abstract}
\maketitle

\section{Introduction}
A \emph{Legendrian knot} in a contact 3-manifold $(M^3, \xi)$ is an embedded circle that is everywhere tangent to $\xi$.
There are three classical invariants of Legendrian knots: the \emph{smooth knot type}, the \emph{Thurston-Bennequin number} ($\tb$) and the \emph{rotation number} ($\rot$).
A natural question is if these data are enough to classify Legendrian knots up to (Legendrian) isotopy. It turned out that the general answer is no, but there exist knots whose $\tb$ and $\rot$ determine the Legendrian knot type. We say that a knot type is \emph{Legendrian simple} if any two Legendrian realizations of it with equal Thurston-Bennequin and rotation number are Legendrian isotopic.
\\
An embedded circle in a contact 3-manifold $(M^3, \xi)$ that is everywhere transverse to $\xi$ is called a \emph{transverse knot}. Besides the smooth knot type, the classical invariant of a transverse knot is the \emph{self-linking number} ($\sl$). Similarly to Legendrian simplicity, we call a knot type \emph{transversely simple} if any two transverse realizations with equal self-linking number are transverse isotopic.

The unknot \cite{E93} and all torus knots \cite{EH01} are both Legendrian and transversely simple.
The first examples for Legendrian non-simple knots were found by Chekanov \cite{Ch02}. Since then, more examples were detected, see \cite{EFM01, Ng05}. However, we still cannot categorize Legendrian and transverse knots up to isotopy. For twist knots, Etnyre, Ng and Vértesi \cite{ENgV13} gave a complete classification. Ozsváth and Stipsicz \cite{OS10} examined two-bridge knots and gave sufficient conditions for transverse non-simplicity on the corresponding rational number.  

In this paper we determine further Legendrian and transversely non-simple two-bridge knots. The family of \emph{two-bridge knots} consists of knots which admit a diagram such that the natural height function has exactly four critical points: two maxima and two minima. We also call them \emph{rational knots}, since they can be classified with rational numbers. If not stated otherwise, we consider Legendrian and transverse knots in $(S^3, \xi_{\std})$, where $\xi_{\std}$ is the standard contact structure on $S^3$.

By the theorem of Epstein, Fuchs and Meyer \cite{EFM01} we know that every transverse knot type can be realized as the transverse push-off of some Legendrian knot type. Moreover, two oriented Legendrian knots become Legendrian isotopic after some number of negative stabilizations if and only if their transverse push-offs are transversely isotopic.
That is, Legendrian simplicity implies transverse simplicity. However, the converse is not true: there are knot types which are transversely simple but Legendrian non-simple, for examples see \cite{EFM01}.

In Theorem \ref{sajat} we show Legendrian non-simplicity for some two-bridge knots.

\begin{theorem}\label{sajat}
Suppose that $\frac{p}{q}\in \mathbb{Q}$ has the continued fraction expansion $\frac{p}{q}=[a_1,...,a_{2m+1}]$ for $m\geq 1$ and suppose that 
\begin{itemize}
\item $a_2$ is odd,
\item $a_1$ and $a_{2i}$ is even for $i>1$,
\item $\sum\limits_{i=1}^m a_{2i+1}$ is odd.
\end{itemize}
Then the corresponding 2-bridge knot $K$ admits at least $\lceil \frac{a_1}{4} \rceil$ distinct Legendrian realizations with $\tb=\sum\limits_{i=1}^m a_{2i+1}$ and $\rot=0$.
\end{theorem}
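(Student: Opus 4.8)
The plan is to realize the two‑bridge knot $K=K(p/q)$ as a Legendrian knot on a specific Lagrangian or convex surface coming from the plumbing/continued‑fraction description, and then to detect the distinct Legendrian types by means of the knot Floer invariants of Ozsv\'ath–Szab\'o–Thurston (or the transverse invariant $\hat\theta$ in $\widehat{\HFK}$). First I would fix, for each choice of sign vector, an explicit Legendrian front diagram: starting from a standard front for the rational tangle encoded by $[a_1,\dots,a_{2m+1}]$, with the parity hypotheses on the $a_i$ controlling how cusps and crossings can be arranged so that $\tb$ equals $\sum_{i=1}^m a_{2i+1}$ and $\rot=0$. The parity conditions ($a_2$ odd, $a_1$ and the other $a_{2i}$ even, $\sum a_{2i+1}$ odd) are exactly what is needed for such a front to close up to an \emph{unoriented} Legendrian knot with vanishing rotation and the prescribed Thurston–Bennequin number; I would carry out this bookkeeping first, since it is routine but must be done carefully.

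The second and main step is to produce $\lceil a_1/4\rceil$ of these fronts that are pairwise non‑isotopic as Legendrian knots. The natural mechanism is stabilization: one has a one‑parameter family of Legendrian representatives obtained by ``shifting'' a block of $a_1$ half‑twists, and consecutive members of the family differ by a pair of stabilizations of opposite sign (an \emph{exchange move} / admissible transverse push‑off change). All of them have the same classical invariants, so they cannot be distinguished classically; the separation must come from a non‑classical invariant. I would use the $\HFK$ transverse/Legendrian invariant: compute $\hat\theta(L_k)$ for each representative $L_k$ in $\widehat{\HFK}(m(K))$ and show these classes are pairwise distinct — or, more robustly, use the associated ``$U$‑torsion order'' or the refined $\HFK$‑type invariant that Ozsv\'ath–Stipsicz used, whose value changes by a controlled amount with each stabilization‑swap, yielding $\lceil a_1/4\rceil$ distinct values. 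Here one needs the structure of $\widehat{\HFK}(K(p/q))$ for two‑bridge knots, which is thin and completely determined by the Alexander polynomial and signature; the relevant rank computation in the top/relevant Alexander gradings is where the hypotheses on the parities and the oddness of $\sum a_{2i+1}$ re‑enter, guaranteeing enough ``room'' in the homology to host $\lceil a_1/4\rceil$ independent classes.

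The final step is bookkeeping of the count: translating ``$a_1$ half‑twists, stabilizations in pairs of opposite sign, rotation number forced to $0$'' into the bound $\lceil a_1/4\rceil$ — the factor $4$ coming from the fact that each independent move consumes two positive and two negative stabilizations (or equivalently shifts a length‑$4$ window of the twist region) while keeping $\rot=0$ and $\tb$ fixed. I expect the genuinely hard part to be the second step: showing that the candidate representatives are actually \emph{distinct}, i.e. that the transverse invariant (or its $U$‑torsion refinement) really does take $\lceil a_1/4\rceil$ different values. This requires an honest computation of the $\HFK$ invariant along the stabilization chain — most cleanly via a grid‑diagram or bordered‑Floer model of the rational tangle, tracking the cycle representing $\hat\theta$ through each destabilization — and then a non‑vanishing/independence argument in the thin complex $\widehat{\HFK}(m(K))$. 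Everything else (front construction, classical‑invariant computation, the combinatorial count) is routine by comparison.
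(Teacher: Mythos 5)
There is a genuine gap, and it sits exactly where you locate the ``genuinely hard part.'' Your plan is to exhibit a chain of fronts related by stabilization swaps and then to \emph{directly} compute the transverse/Legendrian invariant $\hat\theta$ of each representative inside the thin group $\widehat{\HFK}(m(K))$, tracking the cycle through grid or bordered models. No such direct computation is carried out in the paper, and it is not known how to do it for these knots; the entire point of the Ozsv\'ath--Stipsicz strategy that the paper follows is to avoid it. The actual construction is a Legendrian \emph{satellite}: a fixed solid-torus pattern $\widetilde{L}$ encoded by $a_2,\dots,a_{2m+1}$ is placed in standard neighbourhoods of the Legendrian unknots $U(a,b)$ with $\tb=-(k+l)/2$ and $\rot=k-l$, one for each decomposition $a_1=k+l$ into odd parts. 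The parity hypotheses are used to prove that $\w(\widetilde{L})=0$ (Lemma \ref{lemma:w=0}), which is what forces all these satellites to share the same $\tb$ and $\rot$ independently of the companion --- not a bookkeeping of cusps on a rational-tangle front, and not a count of how many stabilizations a ``window shift'' consumes. Your heuristic that the oddness of $\sum a_{2i+1}$ provides ``enough room in the homology to host $\lceil a_1/4\rceil$ independent classes'' is also off target: the rank of $\widehat{\HFK}(K)$ in a single bigrading plays no role in separating the invariants.

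The distinguishing mechanism is indirect. One performs contact $(-1)$-surgery on an unknot linking the pattern to obtain Legendrian knots $L'_{k,l}$ in lens spaces $(L(a+b+2,1),\xi_{k,l})$, and uses the surgery exact triangle of $\widehat{\HFK}$: the parity conditions enter a second time to show (via a Seifert genus bound on the third term $K_0$, Lemma \ref{lemma:HFKA=0}) that the triangle degenerates to an isomorphism in the Alexander grading of $\mathcal{L}$. The invariants $\mathcal{L}(L'_{k,l})$ are then distinguished because their images under the $U=1$ map are the contact invariants $c(L(a+b+2,1),\xi_{k,l})$, which lie in pairwise distinct $\mathrm{Spin}^c$ structures since the companion unknots have different rotation numbers. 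Finally, the count $\lceil a_1/4\rceil$ arises from the $\frac{a_1}{2}$ odd partitions $(k,l)$ modulo the order-two mapping class group of a non-torus two-bridge knot --- not from pairs of opposite stabilizations. Without the satellite construction, the surgery triangle, and the $\mathrm{Spin}^c$ separation, your second step has no workable engine, so the proposal as written does not close.
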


The proof uses the Legendrian invariant $\mathcal{L}$, introduced in \cite{LOSSz08}. It is known that $\mathcal{L}$ is invariant under negative stabilization, therefore, it is also an invariant of the transverse knot type. Due to these, Theorem \ref{sajat} and its proof is also true for transverse knots:

\begin{theorem}\label{sajattransv}
Suppose that $\frac{p}{q}\in \mathbb{Q}$ has the continued fraction expansion $\frac{p}{q}=[a_1,...,a_{2m+1}]$ for $m\geq 1$ and suppose that 
\begin{itemize}
\item $a_2$ is odd,
\item $a_1$ and $a_{2i}$ is even for $i>1$,
\item $\sum\limits_{i=1}^m a_{2i+1}$ is odd.
\end{itemize}
Then the corresponding 2-bridge knot $K$ admits at least $\lceil \frac{a_1}{4} \rceil$ distinct transverse realizations with $\sl=\sum\limits_{i=1}^m a_{2i+1}$.
\end{theorem}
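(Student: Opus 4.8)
The theorem follows from Theorem \ref{sajat} essentially for free, using the correspondence between Legendrian and transverse knots described in the introduction. So my plan is to reduce the transverse statement to the already-established Legendrian statement via the LOSS invariant $\mathcal{L}$.

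Let me think about the structure. Theorem \ref{sajat} produces $\lceil a_1/4 \rceil$ Legendrian realizations $\mathfrak{L}_1, \dots, \mathfrak{L}_N$ (with $N = \lceil a_1/4\rceil$) of $K$, all with $\tb = \sum a_{2i+1}$ and $\rot = 0$, that are pairwise non-isotopic. The proof of that theorem — which I'm told I may assume — distinguishes them using the invariant $\mathcal{L}$ from \cite{LOSSz08}, i.e., the classes $\mathcal{L}(\mathfrak{L}_j) \in \widehat{\HFK}$ (or the appropriate Floer group) are pairwise distinct, and moreover presumably remain distinct after any number of negative stabilizations (this is the key extra property: $\mathcal{L}$ is invariant under negative stabilization).

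Now recall the Epstein–Fuchs–Meyer correspondence \cite{EFM01}: every transverse knot is the transverse push-off of a Legendrian knot, and two Legendrian knots have transversely isotopic push-offs iff they become Legendrian isotopic after finitely many negative stabilizations. Take the transverse push-offs $\mathfrak{T}_j := \mathfrak{T}(\mathfrak{L}_j)$. Their self-linking numbers satisfy $\sl(\mathfrak{T}_j) = \tb(\mathfrak{L}_j) - \rot(\mathfrak{L}_j) = \sum a_{2i+1} - 0 = \sum a_{2i+1}$, so all the $\mathfrak{T}_j$ have the required self-linking number. It remains to check they are pairwise non-transversely-isotopic.

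Suppose $\mathfrak{T}_i$ and $\mathfrak{T}_j$ were transversely isotopic. By the EFM correspondence, $\mathfrak{L}_i$ and $\mathfrak{L}_j$ would become Legendrian isotopic after some numbers of negative stabilizations, say $S_-^{k}(\mathfrak{L}_i) \simeq S_-^{l}(\mathfrak{L}_j)$. Since $\rot(\mathfrak{L}_i) = \rot(\mathfrak{L}_j) = 0$ and $\tb(\mathfrak{L}_i) = \tb(\mathfrak{L}_j)$, comparing classical invariants of the stabilizations forces $k = l$. But $\mathcal{L}$ is invariant under negative stabilization, so $\mathcal{L}(S_-^k(\mathfrak{L}_i)) = \mathcal{L}(\mathfrak{L}_i)$ and likewise $\mathcal{L}(S_-^k(\mathfrak{L}_j)) = \mathcal{L}(\mathfrak{L}_j)$, whence $\mathcal{L}(\mathfrak{L}_i) = \mathcal{L}(\mathfrak{L}_j)$, contradicting the distinctness established in the proof of Theorem \ref{sajat}. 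Hence the $\mathfrak{T}_j$ are pairwise non-isotopic, giving at least $\lceil a_1/4\rceil$ distinct transverse realizations.

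**Where the real content sits.** There is essentially no new obstacle here: the entire weight of the argument is carried by the proof of Theorem \ref{sajat} — specifically, the construction of the Legendrian representatives and the computation showing that their $\mathcal{L}$-invariants are distinct (and stay distinct under negative stabilization, or equivalently that $\mathcal{L}$ descends to a transverse invariant distinguishing the push-offs). The only thing to be careful about in the present reduction is the bookkeeping of classical invariants under negative stabilization — that $S_-$ decreases $\tb$ by $1$ and decreases $\rot$ by $1$ — to conclude $k = l$ above; since all $\rot$ values start at $0$ this is immediate. One could alternatively phrase the whole thing without invoking EFM explicitly, by noting that $\mathcal{L}$, being unchanged under negative stabilization, directly yields a transverse invariant $\overline{\mathcal{L}}$ of the push-offs, and that $\overline{\mathcal{L}}(\mathfrak{T}_i) = \mathcal{L}(\mathfrak{L}_i)$ are pairwise distinct — which is exactly the phrasing the paper seems to intend when it says "Theorem \ref{sajat} and its proof is also true for transverse knots."
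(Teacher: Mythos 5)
Your reduction is correct and is exactly the route the paper takes: it proves the Legendrian statement via the invariant $\mathcal{L}$, then observes that $\mathcal{L}$ is unchanged by negative stabilization (hence descends to an invariant of the transverse push-offs) and that $\sl(T)=\tb(L)-\rot(L)=\sum a_{2i+1}$. Your closing remark that one need not even track the stabilization counts $k,l$ is the cleaner phrasing and matches the paper's intent.
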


Ozsváth and Stipsicz already stated a very similar result \cite[Theorem 5.8.]{OS10}. Our theorem is a generalization of theirs, in the sense that they assumed that all $a_i$ for $i\neq 2,3$ are even, while we drop the conditions on the terms $a_{2i+1}$ for $i>1$. This way we get much more new examples of Legendrian and transversely non-simple two-bridge knots.

However, the two-bridge knots mentioned in Theorem \ref{sajat} and \ref{sajattransv} are still not all the Legendrian and transversely non-simple ones. In Section \ref{sec:alg}, we show even more examples of Legendrian non-simple two-bridge knots that are not included in the above theorems. The way we verify their non-simplicity is a generalization of the proof of Theorem \ref{sajat}. As a demonstration, we will prove the following:
\begin{proposition}\label{prop:computation}
The two-bridge knot $K_\frac{14}{1825}$ is Legendrian non-simple.
\end{proposition}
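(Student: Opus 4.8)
The plan is to realize $K_{\frac{14}{1825}}$ via its continued fraction expansion and show that it fails the exact analogue of the computation underlying Theorem~\ref{sajat}, but with a finer invariant-theoretic argument. First I would compute a continued fraction expansion $\frac{1825}{14}=[a_1,\dots,a_{2m+1}]$ and observe that the parity hypotheses of Theorem~\ref{sajat} are \emph{not} satisfied (otherwise the proposition would be immediate), so the bare statement of that theorem does not apply. The point of Proposition~\ref{prop:computation} is that the \emph{method} of proof still goes through. Concretely, two-bridge knots are alternating, so $\widehat{\HFK}$ is determined by the Alexander polynomial and signature; I would use this (or a direct Legendrian front/grid presentation coming from the continued fraction) to identify the two relevant top Alexander-grading summands that host the invariant $\mathcal{L}$.

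Next I would build explicit Legendrian representatives of $K_{\frac{14}{1825}}$ from the rational tangle description — stabilizing/destabilizing as dictated by the $a_i$ — each with $\tb$ and $\rot$ equal to the target classical invariants, so that they are candidates to be pairwise non-isotopic. The heart of the argument is then to compute the LOSS invariant $\mathcal{L}$ (equivalently, its image under the relevant map to $\widehat{\HFK}$, or the associated transverse invariant in $\widehat{\HFK}$ as in Ozsváth--Stipsicz) for each representative, and to show these classes are distinct. As in \cite{OS10} and in the proof of Theorem~\ref{sajat}, this reduces to a combinatorial bookkeeping problem: track how the generator representing $\mathcal{L}$ sits in the chain complex associated to the bridge/grid diagram, and verify that the differentials cannot identify the classes coming from different stabilization patterns. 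The number $\lceil a_1/4\rceil$ in Theorem~\ref{sajat} suggests the analogous count here will again be governed by the first continued fraction coefficient; for $\frac{14}{1825}$ I expect at least two distinct classes, which already yields non-simplicity.

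The main obstacle is precisely the last step: the parity conditions in Theorem~\ref{sajat} were exactly what made the relevant portion of $\widehat{\HFK}$ and the placement of $\mathcal{L}$ transparent, and dropping them means one must redo the homology computation ``by hand'' for this knot, carefully locating the Maslov and Alexander gradings of the cycles in question and checking that no cancelling differential exists. I would handle this by exploiting alternation: compute $\Delta_{K}(t)$ and $\sigma(K)$ from the continued fraction data, thin out $\widehat{\HFK}$ accordingly, and then pin down the $\mathcal{L}$-classes of the constructed Legendrian representatives by comparing their behavior under stabilization against the known values. A secondary technical point is bookkeeping the orientation/mirroring conventions so that the target invariants $\tb$, $\rot$ (resp.\ $\sl$) come out matching for all representatives; this is routine but must be done consistently. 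Once the two (or more) $\mathcal{L}$-classes are seen to be non-zero and distinct, Legendrian non-simplicity of $K_{\frac{14}{1825}}$ follows at once, and, via invariance of $\mathcal{L}$ under negative stabilization, so does transverse non-simplicity.
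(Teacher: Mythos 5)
Your proposal correctly identifies that $\frac{14}{1825}=[130\ 2\ 1\ 3\ 1]$ fails the parity hypotheses of Theorem \ref{sajat} (here $a_2=2$ is even) and that the alternating-knot formula of Theorem \ref{thm:HFKkiszam} is the computational input, but it replaces the actual mechanism of the argument with a plan that would not work. The paper does not compute $\mathcal{L}$ at the chain level in a diagram of $K_{\frac{14}{1825}}$ and check that ``no cancelling differential exists'': for $a_1=130$ such a direct computation is out of reach, and in any case distinctness of classes in $\widehat{\HFK}(-S^3,L_{k,l})$ has to be established modulo the automorphism and mapping class group actions, which a naive count of generators does not see. Instead, the representatives are the Legendrian satellites $L_{k,l}=\Sat(U(a,b),\widetilde{L})$ for odd $k,l$ with $k+l=a_1=130$ (which requires first checking $\w(\widetilde{L})=0$, done here via $\pi_2\circ\pi_1=aba$), and their invariants are distinguished indirectly: contact $(-1)$-surgery on an unknot produces $L^\prime_{k,l}$ in a lens space, the invariants $\mathcal{L}(L^\prime_{k,l})$ map to contact invariants lying in pairwise different $\mathrm{Spin}^{\mathrm{c}}$ structures, and the surgery exact triangle transfers this distinctness back to the $\mathcal{L}(L_{k,l})$. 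None of this appears in your outline.

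The step you are missing most critically is the quantitative control of that transfer. The map $\widehat{F}_S$ is \emph{not} injective here: one computes $\tilde{A}=\frac{1}{2}(\tb-\rot+1)=-2$ (with $\rot=-3$ and $\tb=-8$ from the writhe and cusp bookkeeping, which now differs from the situation of Lemma \ref{lemma:HFKA=0} because $d_2=3$ is odd), and the third term $K_0$ of the triangle has $\widehat{\HFK}$ of rank $r=2$ in grading $\tilde{A}$, read off from its Alexander polynomial. This bounds the rank of the kernel of $\widehat{F}_S$, so one still retains at least $\lceil 130/(4\cdot 2^{2})\rceil=9$ distinct Legendrian classes. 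Without this exact-triangle argument, or some substitute for it, your outline gives no actual reason why even two of your candidate representatives have distinct invariants, so the proof is incomplete at precisely its crucial point.
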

Furthermore, we give a method to decide whether an arbitrary two-bridge knot can be proven to be Legendrian non-simple using the same tools as in Theorem \ref{sajat} and \ref{sajattransv}.

$\mathbf{Acknowledgement:}$ I thank my supervisor, András Stipsicz, for his help and guidance.

\section{Proof of Theorem \ref{sajat} and \ref{sajattransv}}\label{bizonyitasok}



We prove Theorem \ref{sajat}; the proof of Theorem \ref{sajattransv} is the same. We follow the proof of Ozsváth and Stipsicz for \cite[Theorem 5.8.]{OS10}.

\begin{proof}[Proof of Theorem \ref{sajat}]
Let $L$ be a Legendrian knot in a contact manifold and $\widetilde{L}\subset (S^1 \times D^2, \xi_{\std}^T)$ a Legendrian link in the solid torus with the standard contact structure on it. Consider a tubular neighbourhood $\nu L$ of $L$. By the tubular neighbourhood theorem \cite{L98}, there is a contactomorphism between $\nu L$ and the solid torus containing $\widetilde{L}$.
Therefore, we can embed $\widetilde{L}$ into $\nu L$. The new Legendrian knot obained this way is $\Sat(L,\widetilde{L})$, the \emph{Legendrian satellite} of $L$ and $\widetilde{L}$.

For the Thurston-Bennequin number and the rotation number the following equations hold \cite{OS10}:
$$\tb(\Sat(L,\widetilde{L}))=\w(\widetilde{L})^2\cdot \tb(L)+\tb(\widetilde{L}),$$
$$\rot(\Sat(L,\widetilde{L}))=\w(\widetilde{L})\cdot \rot(L)+\rot(\widetilde{L}),$$
where $\w(\widetilde{L})$ denotes the \emph{winding number} of $\widetilde{L}$, that is, the number of times $\widetilde{L}$ goes around the $S^1$ direction of the solid torus $S^1 \times D^2$. Note that if $\w(\widetilde{L})=0$ then the invariants tb and rot of the satellite knot are independent of $L$.

\begin{figure}[ht]
\centering
\includegraphics[width=0.55\textwidth]{./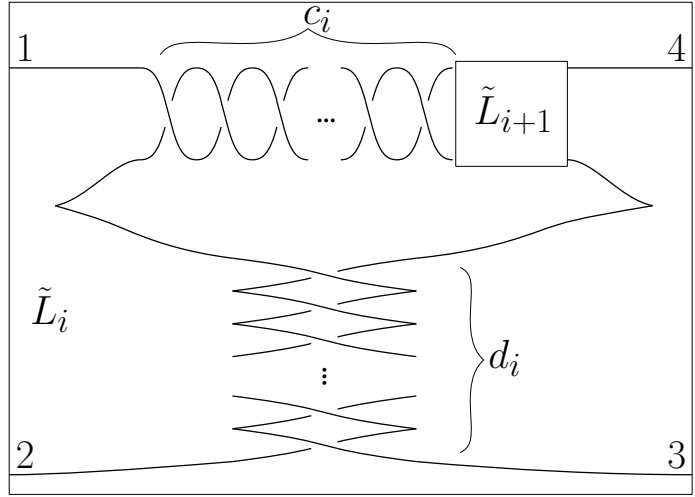}
\caption{The solid torus knot $\widetilde{L}_i$. The data $c_i$ and $d_i$ for $i=1,...,m$ determine $\widetilde{L}$.}
\label{fig:Lihullam}
\end{figure}

Let $\widetilde{L}_i$ be the solid torus knot (i.e. a knot in $S^1\times D^2$) with $c_i$ and $d_i$ crossings given in Figure \ref{fig:Lihullam}. (We will refer to the numbering of the arcs shown in the figure later.) Starting with $i=2$ we put $\widetilde{L}_i$ into the box of $\widetilde{L}_{i-1}$. We choose $\widetilde{L}$ to be the knot obtained by putting together $m$ pieces this way ($a_{2i}=d_i$, $a_{2i+1}=c_i$ for $i=1,...,m$). (Note that $\widetilde{L}$ is independent from $a_1$.) For $i=m$, the innermost element $\widetilde{L}_m$ contains no further terms in its inner box, inside that there are two horizontal arcs instead. Also note that for the outermost term $\widetilde{L}_1$ the left and right hand side of the rectangle shown in Figure \ref{fig:Lihullam} are identified, thus, the arcs numbered with 1 and 4 and the arcs 2 and 3 are connected.

\begin{lemma}\label{lemma:w=0}
$\w({\widetilde{L}})=0$ in the case when
\begin{itemize}
\item $a_2$ is odd,
\item $a_1$ and $ a_{2i}$ is even for $i>1$ ,
\item $\sum\limits_{i=1}^m a_{2i+1}$ is odd.
\end{itemize}
\end{lemma}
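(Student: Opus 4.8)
The plan is to compute $\w(\widetilde L)$ as the algebraic intersection number of $\widetilde L$ with a meridian disk of the solid torus $S^1 \times D^2$. Note first that, as observed above, $\widetilde L$ is assembled from $a_2,\dots,a_{2m+1}$ only and is independent of $a_1$, so $\w(\widetilde L)$ cannot depend on $a_1$ and the first hypothesis plays no role in this lemma. I would place the meridian disk at the seam of the outermost piece $\widetilde L_1$, where the arcs $1,4$ and the arcs $2,3$ are identified. From Figure~\ref{fig:Lihullam} one sees that $\widetilde L$ meets this disk in exactly two points, one on the arc $1$--$4$ and one on the arc $2$--$3$, so $\w(\widetilde L) \in \{-2,0,2\}$, and the claim reduces to showing that these two intersection points carry opposite signs, i.e. that the knot crosses the seam in opposite $S^1$-directions at the two points.

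To determine these signs, fix an orientation of $\widetilde L$ and track, box by box, the induced orientations of the four boundary arcs of $\widetilde L_i$ (each arc either entering or leaving the box) together with the way the interior of $\widetilde L_i$ connects these four arcs to the four arcs of its inner box. A twist region of $a_j$ half-twists contributes the identity strand permutation when $a_j$ is even and a transposition when $a_j$ is odd, and in the odd case it also interchanges the corresponding pair of connecting arcs; recording this gives a small transfer rule for passing from the inner box of $\widetilde L_i$ to its outer arcs. Starting from $\widetilde L_m$, whose inner box is closed off by the two horizontal arcs so that the initial connection-and-orientation pattern is forced, I would compose these transfer rules for $i = m, m-1, \dots, 1$ — an induction on the number of pieces — to read off the orientations of the arcs $1,2,3,4$ of $\widetilde L_1$ at the seam.

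Finally I would substitute the hypotheses: $a_{2i}$ even for $i>1$ (these vertical twist regions propagate the pattern unchanged), $a_2$ odd (this flips the pattern once), and $\sum_{i=1}^m a_{2i+1}$ odd (this controls the net effect of the horizontal twist regions $c_i$). Together these parities force the knot to traverse the seam rightward at one of the two points and leftward at the other, so the two signed intersections cancel and $\w(\widetilde L)=0$. The hard part will be getting the transfer rule exactly right — in particular keeping straight which pairs of arcs the $c_i$- and $d_i$-twist regions join inside $\widetilde L_i$ and what each does to orientations — and then checking that it is precisely these parity conditions, and not some weaker combination, that make the two seam crossings cancel; a careful case analysis over the handful of relevant local configurations, packaged as the induction above, should settle this.
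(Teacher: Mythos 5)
Your proposal follows essentially the same route as the paper: the paper likewise reduces $\w(\widetilde L)$ to deciding whether the strand entering the outermost box at position $1$ exits at position $2$ or $3$, and tracks this by assigning to each $\widetilde L_i$ a permutation of the four boundary points (an element of $S_3=\langle a,b\mid a^2=b^2=(ab)^3=\id\rangle$ fixing position $4$) determined by the parities of $c_i$ and $d_i$, then composing; under the stated hypotheses the composite is $ab^{n}$ with $n=\sum_{i=1}^m a_{2i+1}$ odd, hence equals $ab$, which is exactly the case $\w(\widetilde L)=0$. The local case analysis you defer as ``the hard part'' is precisely the paper's table of the four permutations indexed by the parities of $(c_i,d_i)$, so your plan is sound and matches the paper's argument.
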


The proof of Lemma \ref{lemma:w=0} will follow from the general method we give in Section \ref{sec:alg}.

Consider $\widetilde{L}$ as constructed above. $\widetilde{L}$ is determined by $a_2$, $a_3$,...,$a_{2m+1}$. Let $k$ and $l$ be odd numbers such that $k+l=a_1$. For $k=2a+1$ and $l=2b+1$, let $U(a,b)$ denote a Legendrian realization of the unknot with $\tb(U(a,b))=-\frac{k+l}{2}$ and $\rot(U(a,b))=k-l$ shown in Figure \ref{fig:unknot}, and consider the Legendrian satellite $L_{k,l}=\Sat(U(a,b),\widetilde{L})$. $L_{k,l}$ is smoothly isotopic to the 2-bridge knot $K_{\frac{p}{q}}=[a_1,...,a_{2m+1}]$ where $\frac{p}{q}=[a_1,...,a_{2m+1}]$ is the continued fraction expansion of $\frac{p}{q}$.

\begin{figure}[ht]
\centering
\includegraphics[width=0.4\textwidth]{./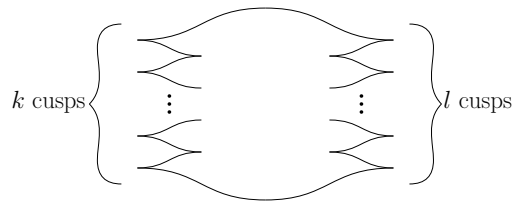}
\caption{$U(a,b)$, a Legendrian realization of the unknot}
\label{fig:unknot}
\end{figure}

Consider the 2-component link that consists of $L_{k,l}$ and an unknot so that the underlying smooth figure looks like Figure \ref{fig:link}. The linking number of the two knots is zero, since $\w({\widetilde{L}})=0$. After doing contact $(-1)$-surgery along the unknot component we get the Legendrian knot $L^\prime_{k,l}$ in the lens space $L(a+b+2,1)$ with contact structure $\xi_{k,l}$. 

\begin{figure}[ht]
\centering
\includegraphics[width=0.3\textwidth]{./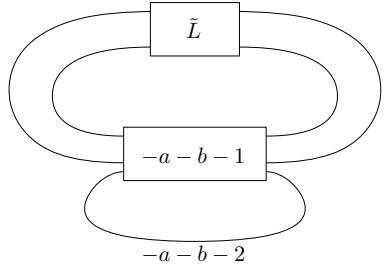}
\caption{The smooth knot $L_{k,l}$ with an unknot}
\label{fig:link}
\end{figure}

Let $U'$ denote the Legendrian push-off of the unknot component. Applying contact $(+1)$-surgery along $U'$ cancels the first contact $(-1)$-surgery and gives back the standard contact 3-sphere with $L_{k,l}$. Consider the distinguished triangle of knots induced by this contact $(+1)$-surgery along $U'$ (see Figure \ref{fig:triangle} in $-S^3$ with mirrors). According to Figure \ref{fig:triangle}, the third term of the triangle is a 2-component link: its first component is a knot denoted by $K_0$ and its second component is an unknot along which we do a surgery. Since this surgery curve is unlinked to $K_0$, $K_0$ is null-homologous.

\begin{figure}[ht]
\centering
\includegraphics[width=1\textwidth]{./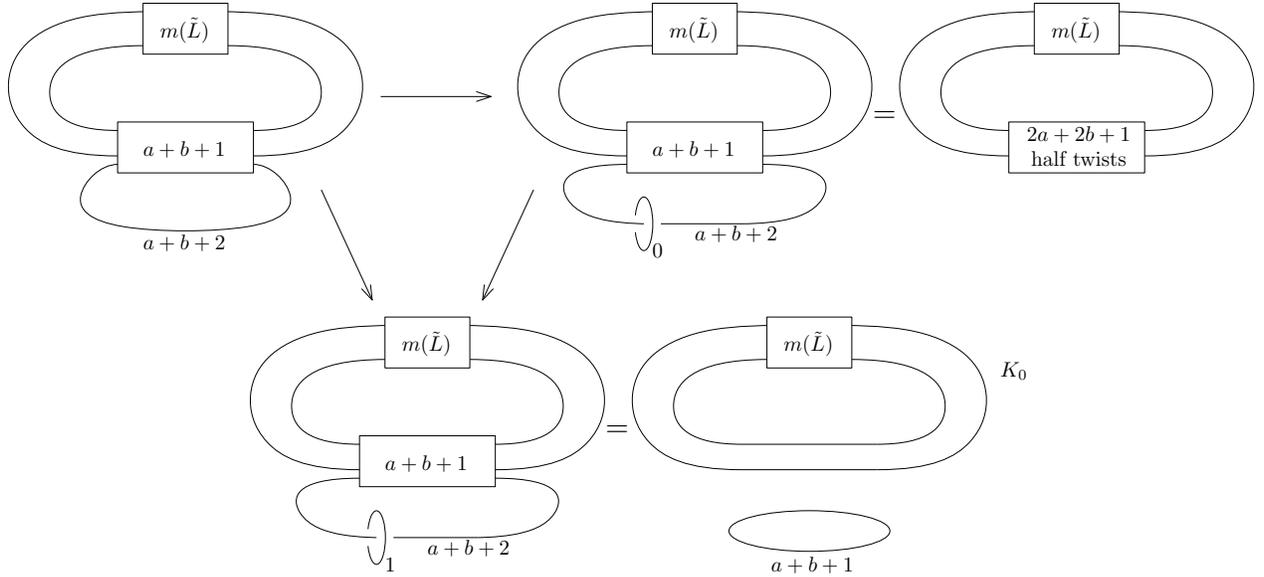}
\caption{The distinguished triangle of the contact (+1)-surgery\\
Notice that $m({L^\prime_{k,l}})\subset L(a+b+2,1)$; $m({L_{k,l}})\subset -S^3$ and
$K_0\subset L(a+b+1,1)$.}
\label{fig:triangle}
\end{figure}

This distinguished triangle of knots induces an exact triangle of knot Floer homologies $\widehat{\HFK}$, see \cite[Theorem 8.2.]{OSz04}. Contact $(+1)$-surgery along $S$ induces the following map on the homologies:
$$\widehat{F}_S: \widehat{\HFK}(-L(a+b+2,1), L^\prime_{k,l})\rightarrow \widehat{\HFK}(-S^3, L_{k,l}).$$
This induced map preserves the Alexander grading \cite{OSz04}.
Let $\tilde{A}=A(\mathcal{L}(L_{k,l}))$ denote the Alexander grading of the Legendrian invariant $\mathcal{L}(L_{k,l})$.

\begin{lemma}\label{lemma:HFKA=0}
In Alexander grading $\tilde{A}=A(\mathcal{L}(L_{k,l}))$ the $\widehat{\HFK}$ homology of the third term $(-L(a+b+1,1), K_0)$ vanishes when the conditions of Theorem \ref{sajat} hold.
\end{lemma}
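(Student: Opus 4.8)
The plan is to compute $\widehat{\HFK}(-L(a+b+1,1), K_0)$ explicitly in the relevant Alexander grading and show it is trivial there. First I would identify $K_0$ as a concrete knot in the lens space $L(a+b+1,1)$: it arises from the third term of the distinguished triangle in Figure \ref{fig:triangle}, and since its companion surgery curve is unlinked from it, $K_0$ is null-homologous, so its knot Floer homology is defined over $\mathbb{Z}$ (or $\mathbb{Z}/2$) with a genuine Alexander grading. I would present $K_0$ as a Legendrian satellite of an unknot (with the same pattern $\widetilde{L}$, now re-routed past the surgered unknot) exactly as $L_{k,l}$ was, so that the whole computation reduces to an understanding of $\widehat{\HFK}$ of satellites of unknots, i.e. to the knot Floer homology of the $2$-bridge-like knot obtained by a Legendrian satellite construction. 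The key observation is that after the contact $(+1)$-surgery the relevant unknot realization $U(a,b)$ has effectively been replaced by one with $\tb$ changed by $1$ (passing from $L(a+b+2,1)$ to $L(a+b+1,1)$), which shifts the parameters $k,l$ and hence the winding-number-zero pattern sits inside a different ambient unknot; the point will be that this shifts the resulting $2$-bridge knot type to one whose $\widehat{\HFK}$ in the grading $\tilde A$ is zero.

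Concretely, the main computational step is to determine $\tilde A = A(\mathcal{L}(L_{k,l}))$ — the Alexander grading of the LOSS invariant — using the formula relating the Maslov/Alexander grading of $\mathcal{L}$ to the classical invariants: for a Legendrian knot one has $A(\mathcal{L}(L)) = \tfrac{1}{2}(\tb(L) - \rot(L) + 1)$ (the bottom-most relevant grading), and here $\tb(L_{k,l}) = \sum_{i=1}^m a_{2i+1}$, $\rot(L_{k,l}) = 0$, using $\w(\widetilde{L}) = 0$ from Lemma \ref{lemma:w=0} and the satellite formulas. So $\tilde A$ is a fixed number depending only on the $a_{2i+1}$. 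Then I would compute $\widehat{\HFK}(-L(a+b+1,1), K_0)$ from a genus-minimizing or thin-position diagram of $K_0$ — most efficiently, identify the smooth type of $K_0$ as a specific $2$-bridge (or nearly $2$-bridge) knot, whose $\widehat{\HFK}$ is determined by the Alexander polynomial and signature since $2$-bridge knots are alternating hence Floer-thin — and observe that the Alexander polynomial of $K_0$ has no term in degree $\tilde A$, equivalently that the top and bottom of the support of $\widehat{\HFK}(K_0)$ straddle $\tilde A$ without meeting it. This is where the arithmetic conditions of Theorem \ref{sajat} (parity of $a_1$-free quantities: $a_2$ odd, $a_1$ and the $a_{2i}$ even, $\sum a_{2i+1}$ odd) enter: they are exactly what forces the genus of $K_0$, and the location of its $\widehat{\HFK}$ support relative to $\tilde A$, into the right numerical range.

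I expect the main obstacle to be the explicit identification of the smooth knot type of $K_0$ (and the lens space $L(a+b+1,1)$) from the surgery picture of Figure \ref{fig:triangle}, together with pinning down its Alexander polynomial or genus in terms of the continued fraction data $[a_1,\dots,a_{2m+1}]$ — the exact-triangle and grading-shift bookkeeping is routine, but tracking how the pattern $\widetilde{L}$ survives the band move in the triangle and computing the resulting continued fraction requires care. A secondary subtlety is justifying that $K_0$ is indeed Floer-thin (so that $\widehat{\HFK}$ is read off from $\Delta_{K_0}$ and $\sigma(K_0)$); if $K_0$ is literally a $2$-bridge knot in $S^3$ (the lens space factor being carried by the separate surgery component) this is immediate from Ozsváth–Szabó's theorem on alternating knots, and I would argue that it is, by exhibiting an alternating diagram. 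Modulo these identifications, the vanishing in grading $\tilde A$ follows by a direct comparison of the computed support of $\widehat{\HFK}(-L(a+b+1,1), K_0)$ with the single value $\tilde A$.
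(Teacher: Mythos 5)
Your computation of the grading is the same as the paper's: using $\w(\widetilde{L})=0$ from Lemma \ref{lemma:w=0}, the satellite formulas, and the cusp/writhe count one gets $\rot(L_{k,l})=0$, $\tb(L_{k,l})=\sum_{i=1}^m c_i=\sum_{i=1}^m a_{2i+1}$, hence $\tilde{A}=\tfrac{1}{2}\bigl(\sum_{i=1}^m c_i+1\bigr)$. Where your proposal diverges — and where it has a genuine gap — is the vanishing step. You propose to identify the smooth type of $K_0$, compute its Alexander polynomial (and signature, to invoke thinness as in Theorem \ref{thm:HFKkiszam}), and observe that $\Delta_{K_0}$ has no term in degree $\tilde{A}$. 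You never carry this out, and you yourself flag the identification of $K_0$ and of $\Delta_{K_0}$ in terms of $[a_1,\dots,a_{2m+1}]$ as "the main obstacle." But that identification \emph{is} the entire content of the lemma: for the full infinite family of Theorem \ref{sajat} you would need a uniform formula for the support of $\Delta_{K_0}$ over all admissible continued fractions, which is considerably harder than what is actually needed. As written, the proposal is a plan whose decisive step is deferred.

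The paper sidesteps this entirely with a genus bound: applying Seifert's algorithm to the diagram of $K_0$ gives a surface with $\chi=\sum d_i+2-(\sum c_i+\sum d_i)$, hence genus $g=\tfrac{1}{2}\bigl(\sum c_i-1\bigr)<\tilde{A}$, and since the Seifert genus bounds the top Alexander grading supporting $\widehat{\HFK}(-L(a+b+1,1),K_0)$, the homology vanishes in grading $\tilde{A}$. No knowledge of $\Delta_{K_0}$, of the smooth type of $K_0$, or of thinness is required — only a count of Seifert circles and crossings. (Note also that $K_0$ sits in a lens space, so invoking the alternating-knots-in-$S^3$ theorem requires extra justification that you only gesture at.) Your Alexander-polynomial route is essentially the method the paper reserves for the sporadic examples of Section \ref{sec:alg} (Table \ref{tab:nullahom}, Example \ref{ex:nullahom}), precisely in cases where the genus bound fails; there it is feasible because each example is a single explicit knot. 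A further small inaccuracy: under the hypotheses of the lemma $\tilde{A}$ lies strictly \emph{above} the support of $\widehat{\HFK}(K_0)$, not in an interior gap that the support "straddles."
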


The proof of Lemma \ref{lemma:HFKA=0} can be found in Section \ref{sec:alg}.

Then, in the exact triangle of homologies in this grading $\widehat{F}_S$ is an isomorphism. 

Now we are showing that for different $(k,l)$ pairs the Legendrian invariants $\mathcal{L}(L^\prime_{k,l})$ are different:

Consider $$\HFK^-(-L(a+b+2,1), L^\prime_{k,l}) \stackrel{U=1}{\longrightarrow} \mathrm{\widehat{HF}}(-L(a+b+2,1))$$
$$\mathcal{L}(L^\prime_{k,l})\mapsto \mathrm{c}(L(a+b+2,1), \xi_{k,l}).$$

For different $(k,l)$ pairs the rotation of the unknot component of the link in Figure \ref{fig:link} is different. Therefore, the contact invariants $\mathrm{c}(L(a+b+2,1), \xi_{k,l})$ are in different $\mathrm{Spin}^\mathrm{c}$ structures for different $(k,l)$ pairs.
Thus, the invariants $\mathcal{\widehat{L}}(L^\prime_{k,l})$ are different.

Since $k$ and $l$ are odd with $k+l=n$ fixed, the number of $(k,l)$ pairs is $\frac{k+l}{2}$.

Let $\Diff^+(S^3, L_{k,l})$ denote the space of orientation-preserving diffeomorphisms from $S^3$ to itself that fix $L_{k,l}$ pointwise. We denote by $\Diff_0^+(S^3, L_{k,l})$ those elements in $\Diff^+(S^3, L_{k,l})$ that can be connected to the identity map through a one-parameter family of maps in $\Diff^+(S^3, L_{k,l})$. Using these, we define the \emph{mapping class group} $\mathrm{MCG}(S^3, L_{k,l})$ as
$$\mathrm{MCG}(S^3, L_{k,l})=\faktor{\Diff^+(S^3, L_{k,l})}{\Diff_0^+(S^3, L_{k,l})}.$$

$\mathcal{\widehat{L}}(L_{k,l})$ is an element of $\faktor{\mathrm{\widehat{HFK}}(-S^3,L_{k,l} )}{\mathrm{Aut}(\widehat{HFK}(-S^3,L_{k,l} ))}$, for details see \cite{LOSSz08}. According to \cite[Theorem 2.4.]{OS10} this can be lifted to $\faktor{\mathrm{\widehat{HFK}}(-S^3,L_{k,l} )}{\mathrm{MCG}(S^3, L_{k,l})}$.
Due to \cite{HT85, RW08, OS10}, we know that for non-torus 2-bridge knots $|\mathrm{MCG}(S^3, L_{k,l})|=2$. So after considering the action of the mapping class group that can map the $\mathcal{\widehat{L}}(L_{k,l})$ invariants to each other, we obtain that there are at least $\lceil \frac{k+l}{4}\rceil$ different invariants.

It is easy to check the value of the invariants $\tb(L_{k,l})$ and $\rot(L_{k,l})$. Computations can be found in the proof of Lemma \ref{lemma:HFKA=0}. Thus we are ready with the proof of Theorem \ref{sajat}.
\end{proof}

For Theorem \ref{sajattransv}, to calculate the self-linking number of a transverse knot $T$ we can use that $\sl(T)=\tb(L)-\rot(L)$ where $L$ is the Legendrian approximation of $T$.

\section{Further Legendrian non-simple two bridge knots }\label{sec:alg}

In this section, we observe what parity conditions are needed for the above proof to work. We give an algorithmically efficient method that tells from the continued fraction expansion of any rational number whether the necessary conditions hold for the corresponding two-bridge knot. This way we get further examples of Legendrian non-simple two-bridge knots. We also prove Lemma \ref{lemma:w=0} and Lemma \ref{lemma:HFKA=0}.
\\

The proof of Theorem \ref{sajat} is based on two lemmas, that is, two conditions have to hold:
\begin{itemize}
\item $\w({\widetilde{L}})=0$,
\item in Alexander grading $\tilde{A}=A(\mathcal{L}(L_{k,l}))$ the $\widehat{\HFK}$ homology of the third term $(-L(a+b+1,1), K_0)$ vanishes.
\end{itemize}

\begin{subsection}{Determining the cases when the winding number $\w({\widetilde{L}})=0$}\label{subsec:w=0}
$ $

Consider a term $\widetilde{L}_i$ of the knot $\widetilde{L}$ shown in Figure \ref{fig:Lihullam}.
$\widetilde{L}_i$ connects four points of the outer rectangle with four points of the inner one.
Denote the four points on the outer rectangle according to Figure \ref{fig:Lihullam}: the upper left by 1, the lower left by 2, the lower right by 3, the upper right by 4. Each point is connected to one of the four points on the inner rectangle by an arc, and $\widetilde{L}_i$ is the union of these arcs. Let $\pi_i(1)$, $\pi_i(2)$, $\pi_i(3)$ and $\pi_i(4)$ denote the other end of the arcs belonging to the points 1, 2, 3 and 4, respectively, on the inner rectangle.
Notice that $\pi_i(4)$ is always the upper right point of the inner rectangle, while the other three permute depending on the parity of $c_i$ and $d_i$ (the number of crossings in $\widetilde{L}_i$, see Figure \ref{fig:Lihullam}). Therefore, $\pi_i: \{1,2,3,4\}\rightarrow \{1,2,3,4\}$ is a permutation so that 4 is a fixed point.
This way we can assign an element of the symmetric group of order three ($S_3$) to $\widetilde{L}_i$.
Composing permutations $\pi_i$ for $i=1,...,m$ we get how the four points on the outermost rectangle in $\widetilde{L}$ connect to the points on the innermost one. Recall from the construction of $\widetilde{L}$ given in Section \ref{bizonyitasok} that $\widetilde{L}$ is considered in the solid torus, thus for its outermost term $\widetilde{L}_1$ the left and the right hand side of the outer rectangle are identified (i.e. on the outermost rectangle of $\widetilde{L}$ the two upper points and the two lower points are connected). The innermost term $\widetilde{L}_m$ had no further terms in its inner box, but two horizontal arcs. This means that the two upper points and the two lower points of the innermost rectangle of $\widetilde{L}$ are also connected.

Consider the presentation $S_3\cong \langle a,b | a^2=b^2=(ab)^3=\id \rangle$. This way we can represent the elements of $S_3$ by $\id, a, b, ab, ba$ and $aba$. Depending on the parity of $c_i$ and $d_i$, four permutation elements can appear as $\pi_i$ associated to $\widetilde{L}_i$. The following table shows these permutation elements determined by the parity of $c_i$ and $d_i$. The element $a$ is the swap of the lower two positions and $b$ is the swap of the two positions in the left column.

\begin{figure}[ht]
\centering
\includegraphics[width=0.8\textwidth]{./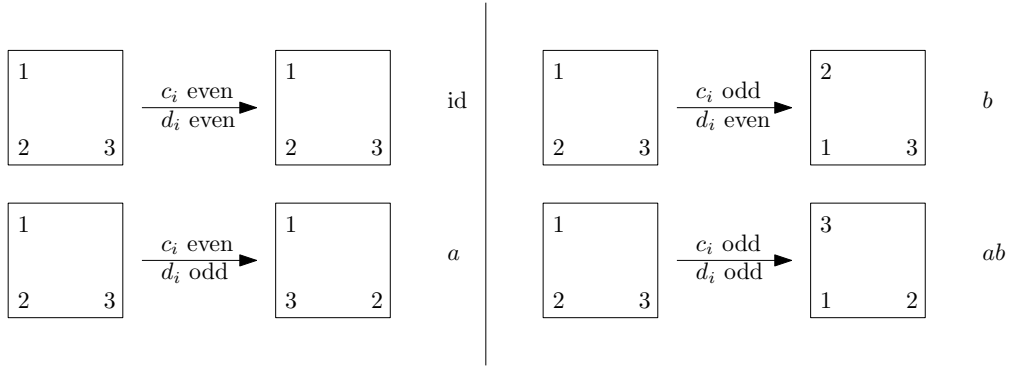}
\caption{The permutation elements determined by the parity of crossings $c_i$ and $d_i$ in $\widetilde{L}_i$}
\label{fig:permutacio}
\end{figure}

Without loss of generality we can assume that the orientation of $\widetilde{L}$ is so that for every term $\widetilde{L}_i$ the upper right arc numbered with 4 is pointing outwards from the outer rectangle of $\widetilde{L}_i$ (that is, arc 4 is pointing right in Figure \ref{fig:Lihullam}). Therefore, $\widetilde{L}$ is a 2-component link if $\pi_m\circ \pi_{m-1}\circ...\circ \pi_1 (1)$ is the upper left corner of the innermost rectangle. This happens when $\pi_m\circ \pi_{m-1}\circ...\circ \pi_1$ is the element $a$ or $\id$ in $S_3$. $\widetilde{L}$ is a knot in any other case.

The outermost term $\widetilde{L}_1$ has two arcs that leave the outer bounding box of $\widetilde{L}_1$ (see Figure \ref{fig:Lihullam}). Therefore, $\widetilde{L}$ has two arcs that go around the torus. This way the winding number of $\widetilde{L}$ is either 2 or 0, and the composition $\pi_m\circ \pi_{m-1}\circ...\circ \pi_1$ determines it:\\
If the arc in $\widetilde{L}$ which points into the outermost rectangle of $\widetilde{L}_1$ in position 1 leaves the rectangle in position 3, then $\w({\widetilde{L}})=2$.
$\w({\widetilde{L}})=0$ if and only if the arc in $\widetilde{L}$  which points into the outermost rectangle of $\widetilde{L}_1$ in position 1 leaves the rectangle in position 2, that is, if $\pi_m\circ \pi_{m-1}\circ...\circ \pi_1 (1)$ and $\pi_m\circ \pi_{m-1}\circ...\circ \pi_1(2)$ are in the lower two positions of the innermost rectangle. This happens when $\pi_m\circ \pi_{m-1}\circ...\circ \pi_1$ is $ab$ or $aba$.
\end{subsection}
\\

\begin{proof}[Proof of Lemma \ref{lemma:w=0}]
First, recall from Section \ref{bizonyitasok} that in our construction of $\widetilde{L}$ we put together $m$ pieces of $\widetilde{L}_i$ so that $a_1=k+l$, $a_{2i}=d_i$, $a_{2i+1}=c_i$ for $i=1,...,m$. Our condition that $a_2=d_1$ is odd ensures that the permutation $\pi_1$ is $a$ or $ab$ (see Figure \ref{fig:permutacio}). Since $a_{2i}=d_i$ is even for $i>1$, all $\pi_2$, $\pi_3$,... and $\pi_m$ are either 1 or $b$. This means that $\pi_m\circ \pi_{m-1}\circ...\circ \pi_1=a b^n$ where $n$ is odd because $\sum\limits_{i=1}^m a_{2i+1}$ is odd. After simplifying with $b^2=id$, we get that $\pi_m\circ \pi_{m-1}\circ...\circ \pi_1=a b$. According to the argument written in Section \ref{subsec:w=0}, this means that $\w({\widetilde{L}})=0$. 
\end{proof}


\begin{subsection}{A method to decide whether $\widehat{\HFK}(-L(a+b+1,1), K_0)=0$ in Alexander grading $\tilde{A}=A(\mathcal{L}(L_{k,l}))$}\label{subsec:HFK=0}
$ $

In this subsection we give a method to compute the $\widehat{\HFK}$ homology of the third term $(-L(a+b+1,1), K_0)$ in the distinguished triangle of knots in Figure \ref{fig:triangle} in Alexander grading $\tilde{A}=A(\mathcal{L}(L_{k,l}))$. 

From $L_{k,l}$ we can obtain $\widetilde{L}$ by removing $2a+2b+1$ half twists, which is equivalent to writing 1 instead of $a_1$ in the continued fraction expansion of the corresponding rational number.

Two-bridge knots are alternating knots, therefore their Alexander polynomial and signature are easily computable and they determine their knot Floer homology. The following theorem is known:

\begin{theorem}[Ozsváth-Szabó \cite{OSz03}]\label{thm:HFKkiszam}
Let $K$ be an alternating knot and $\Delta_K(t)=\sum\limits_{i=-n}^n a_i\cdot t^i$ the symmetrized Alexander polynomial of $K$. Then for the knot Floer homology of $K$
$$\widehat{HFK}_{i,j}(K)\cong 
\begin{cases}
0, & \mathrm{ for } \ j \neq i + \frac{\sigma}{2} \\
\mathbb{F}^{|a_i|} , & \mathrm{ for }\  j = i + \frac{\sigma}{2}
\end{cases}$$ where $i$ denotes the Alexander grading and $j$ the Maslov grading.
\end{theorem}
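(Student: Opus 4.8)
The plan is to establish this by induction on the number of crossings of a reduced alternating diagram, using the unoriented skein exact sequence for the hat flavour of knot Floer homology together with classical facts about the Alexander polynomial, determinant and signature of alternating links — this is essentially the argument of Ozsv\'ath and Szab\'o. Because the skein triangle unavoidably introduces links, the cleanest setup is to prove the assertion for all connected reduced alternating links, for which $\widehat{HFK}$ is again \emph{thin}: concentrated on a single diagonal $j-i=\mathrm{const}$, with the constant governed by the signature.

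First I would assemble the tools. (i) The graded Euler characteristic of $\widehat{HFK}$ recovers the symmetrized Alexander polynomial, so $\sum_i |a_i| \le \dim_{\mathbb{F}} \widehat{HFK}(K)$, with equality exactly when the $a_i$ alternate in sign; by a classical result on alternating knots they do, and then $\sum_i |a_i| = |\Delta_K(-1)| = \det(K)$. (ii) For a reduced alternating diagram with a chosen crossing, one of the two resolutions is again (after an isotopy) a reduced alternating diagram with one fewer crossing while the other is simpler, the determinants add as $\det(L)=\det(L_0)+\det(L_1)$, and the signature changes in a controlled way. (iii) The skein exact sequence $\cdots \to \widehat{HFK}(L_0) \to \widehat{HFK}(L) \to \widehat{HFK}(L_1) \to \cdots$ holds with explicit, computable shifts of the Alexander and Maslov gradings, and the base cases — the unknot and the unlinks — are immediate.

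The heart of the matter is the double inequality $\det(K) \le \dim_{\mathbb{F}}\widehat{HFK}(K) \le \det(K)$: the lower bound is (i), and the upper bound follows by induction from exactness, since $\dim\widehat{HFK}(L) \le \dim\widehat{HFK}(L_0) + \dim\widehat{HFK}(L_1) = \det(L_0)+\det(L_1) = \det(L)$. Hence $\dim_{\mathbb{F}}\widehat{HFK}(K) = \det(K) = \sum_i |a_i|$. Since in each Alexander grading $i$ the Euler characteristic already has absolute value $|a_i|$, this equality of total ranks forces $\widehat{HFK}_{i,*}(K)$ to be concentrated in a single Maslov grading $j(i)$ with $\dim\widehat{HFK}_{i,j(i)}(K)=|a_i|$. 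That the $j(i)$ lie on one diagonal, and that this diagonal is precisely $j = i + \sigma/2$, I would extract by tracking the $\delta = j-i$ grading through the skein sequence — the connecting maps, now forced to vanish by the rank count, shift $\delta$ by amounts determined by the crossing signs — and by matching the diagonal-versus-signature bookkeeping against a model family such as the $(2,n)$ torus knots, where both sides are computed directly.

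The main obstacle is exactly this last synthesis: one has to run the induction over connected alternating links rather than just knots, treat the possibly split resolution separately, and keep careful track of the Alexander and Maslov (hence $\delta$) grading shifts in the exact triangle, so that the determinant computation can be used to conclude both that all connecting homomorphisms vanish and that the two thin resolutions contribute on compatible diagonals. Once that bookkeeping is in place, the Euler-characteristic identity, the classical sign and determinant facts for alternating links, and the model computation fixing the additive constant are routine.
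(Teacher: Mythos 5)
The paper does not prove this statement at all---it is quoted verbatim from Ozsv\'ath--Szab\'o \cite{OSz03} as a known input---so there is no internal proof to compare against; I will compare your sketch with the original argument instead. Ozsv\'ath and Szab\'o's proof is quite different from yours: they construct, from a decorated alternating projection, a Heegaard diagram whose generators are the Kauffman states of the diagram, with the Alexander and Maslov gradings given by sums of local contributions at the crossings. For an alternating projection the difference $M(x)-A(x)$ is the \emph{same} for every Kauffman state, so the whole chain complex sits on one diagonal; since the differential preserves the Alexander grading and drops the Maslov grading by one, it must vanish, the homology equals the chain group, the ranks are $|a_i|$ because Kauffman states compute $\Delta_K$ with no cancellation, and the constant of the diagonal is identified with $\sigma/2$ via the Gordon--Litherland/state-sum formula for the signature. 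Your route---induction on crossings through the unoriented skein triangle plus the determinant identity $\det(L)=\det(L_0)+\det(L_1)$---is essentially the later Manolescu--Ozsv\'ath argument for quasi-alternating links. It is a legitimate alternative and in fact proves a more general statement, but it relies on the unoriented skein exact triangle for knot Floer homology (which requires working over $\mathbb{F}_2$ with the stabilized groups $\widehat{HFK}\otimes V^{\otimes\ell}$, and which controls only the $\delta=j-i$ grading, not the Alexander grading), so the deferred ``bookkeeping'' is genuinely the crux rather than routine.

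One logical slip to fix: from $\dim\widehat{HFK}(K)=\sum_i|a_i|$ together with $\chi\bigl(\widehat{HFK}_{i,*}\bigr)=\pm a_i$ you may only conclude that each $\widehat{HFK}_{i,*}$ is supported in Maslov gradings of a single \emph{parity}, not in a single Maslov grading; e.g.\ ranks $1$ in gradings $0$ and $2$ also realize $|a_i|=2$. The concentration on one diagonal must therefore come from the inductive $\delta$-graded statement carried through the skein triangle (as in Manolescu--Ozsv\'ath), not from the rank count alone; your final paragraph gestures at this, but the induction hypothesis should be stated from the outset as ``$\widetilde{HFK}$ is supported on the single diagonal $\delta=-\sigma/2$'' rather than derived after the fact.
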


Now, it is enough to compute the value of the Alexander grading $\tilde{A}=A(\mathcal{L}(L_{k,l}))$. By \cite{OS10},

$$2\tilde{A}=\tb(L_{k,l})-\rot(L_{k,l})+1.$$

$2\rot(L_{k,l})$ is the difference of up-cusps and down-cusps in the diagram of $\widetilde{L}$ (Figure \ref{fig:Lihullam}), that is always 0.
However, we will see that $\tb$ depends on the parity of $c_i$'s and $d_i$'s in the terms of $\widetilde{L}$.
We know that $\tb=\mathrm{wr}-\frac{1}{2}\mathrm{\#cusps}$, where wr is the \emph{writhe}, the signed sum of crossings in the diagram and $\#$cusps denotes the number of cusps.
The number of cusps is always $2\sum\limits_{i=1}^m d_i$.

For computing the writhe we need to know the sign of the crossings in $\widetilde{L}$. We will determine these termwise for each $\widetilde{L}_i$:
\\
In $\widetilde{L}_i$ there are $c_i+d_i$ crossings. The sign of all the upper crossings ($c_i$ in Figure \ref{fig:Lihullam}) is the same, and so is for the lower ones ($d_i$ in Figure \ref{fig:Lihullam}). Note that the sign of a crossing only depends on the parity of $d_i$ and on the orientation of the arcs of the outer rectangle of $\widetilde{L}_i$. Without loss of generality we can assume that the orientation of $\widetilde{L}$ is so that for every term $\widetilde{L}_i$ the upper right arc numbered with 4 in Figure {\ref{fig:Lihullam}} is pointing outwards from the outer rectangle.

\begin{proposition}\label{prop:writhe}
The following procedure gives wr$(\widetilde{L})$.
\end{proposition}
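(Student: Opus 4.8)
The plan is to track the contribution of each term $\widetilde{L}_i$ to the writhe of $\widetilde{L}$ by combining the local crossing data with the orientation information already encoded in the permutations $\pi_i$ from Subsection~\ref{subsec:w=0}. First I would fix, as stated, the convention that in every term $\widetilde{L}_i$ the arc numbered $4$ points outward from the outer rectangle. Then the signs of the $c_i$ upper crossings and the $d_i$ lower crossings of $\widetilde{L}_i$ are completely determined by (a) the parity of $d_i$ (which controls whether the two strands running through the upper/lower twist regions are parallel or antiparallel, hence whether a crossing is positive or negative) and (b) the orientations induced on the four boundary arcs of the outer rectangle of $\widetilde{L}_i$. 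The orientation on arc $4$ is fixed by our convention; the orientations on arcs $1,2,3$ are transported from the outermost rectangle of $\widetilde{L}$ through the composition $\pi_{i-1}\circ\cdots\circ\pi_1$, together with the identification of the left/right sides of $\widetilde{L}_1$ and the two horizontal closing arcs of $\widetilde{L}_m$. So the procedure is: initialize the orientations at the outer boundary, and for $i=1,\dots,m$ read off the partial product $\pi_{i-1}\circ\cdots\circ\pi_1$ (an element of $S_3$), use it to determine which boundary arcs of $\widetilde{L}_i$ are cooriented, read off the local sign $\varepsilon_i^{\mathrm{up}}\in\{\pm1\}$ of the upper crossings and $\varepsilon_i^{\mathrm{low}}\in\{\pm1\}$ of the lower crossings from a finite case table indexed by the parity of $d_i$ and this orientation data, and add $\varepsilon_i^{\mathrm{up}}c_i+\varepsilon_i^{\mathrm{low}}d_i$ to a running total; the final total is $\mathrm{wr}(\widetilde{L})$.

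The key steps, in order: (1) verify that the sign of a crossing inside a twist region depends only on whether the two strands through it are parallel or antiparallel, and that this parity is governed by the parity of $d_i$ and the relative orientation of the two incoming outer arcs — this is the standard fact that a full twist on two antiparallel strands contributes signed crossings that cancel in pairs while on parallel strands they add, so only the parity of the number of half-twists matters for the net contribution, but here we want the individual sign, which is constant within each block; (2) set up the bookkeeping of how orientations propagate: because arc $4$ is always outward-pointing, one strand of $\widetilde{L}_i$ has a fixed coorientation, and the permutation $\pi_i$ tells us where the other three endpoints go, so composing gives the orientation pattern at every level; (3) assemble the finite table of local writhe contributions $\varepsilon_i^{\mathrm{up}}c_i + \varepsilon_i^{\mathrm{low}}d_i$ as a function of $(\,\text{parity of }d_i,\ \text{incoming orientation pattern}\,)$, checking each of the finitely many cases against Figure~\ref{fig:Lihullam}; (4) state the resulting iterative procedure cleanly and observe it terminates after $m$ steps and uses only the $a_i$'s. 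I would present the procedure itself as the content of the proposition and relegate the case-by-case sign verification to the figure and a short remark.

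The main obstacle I expect is step~(3): getting all the crossing signs right simultaneously, since there are several independent binary choices (parity of $d_i$, orientation of each of the two strands entering $\widetilde{L}_i$, and whether a crossing lies in the $c_i$-block or the $d_i$-block), and a sign error anywhere propagates through the whole product. The cleanest way to control this is to phrase everything relative to the fixed arc~$4$ and to note that only the \emph{relative} orientation of the two strands of $\widetilde{L}_i$ matters for the $c_i$-crossings while an additional parity shift coming from $d_i$ governs the $d_i$-crossings; this collapses the case table to a manageable size. A secondary, purely expository, difficulty is that ``the following procedure'' must be made precise enough to be algorithmically unambiguous while still being short — I would resolve this by recording the $S_3$-valued partial products explicitly (reusing the $\id,a,b,ab,ba,aba$ notation already fixed) and attaching a single figure that displays the local sign in each case, so that the proof reduces to checking the figure.
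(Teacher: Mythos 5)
Your proposal is correct and follows essentially the same route as the paper: the paper's Figure \ref{fig:writhe} is exactly your finite case table, with boxes recording the orientation pattern of the four boundary arcs of each term $\widetilde{L}_i$ (equivalently, your partial products of the $\pi_i$ together with the fixed convention on arc $4$) and arrows indexed by the parities of $c_i$ and $d_i$ carrying the local signed contribution $\pm c_i\pm d_i$, which is summed term by term. The only slip is the parenthetical claim that the crossings of a twist region on antiparallel strands ``cancel in pairs'' --- in such a twist region all crossings carry the same sign, which is the fact you (and the paper) actually use --- but this remark is not load-bearing for your argument.
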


In Figure \ref{fig:writhe} we show a diagram that helps the computation. First, we write and check the method in details, then we demonstrate the computation on an example.

\begin{figure}[ht]
\centering
\includegraphics[width=0.8\textwidth]{./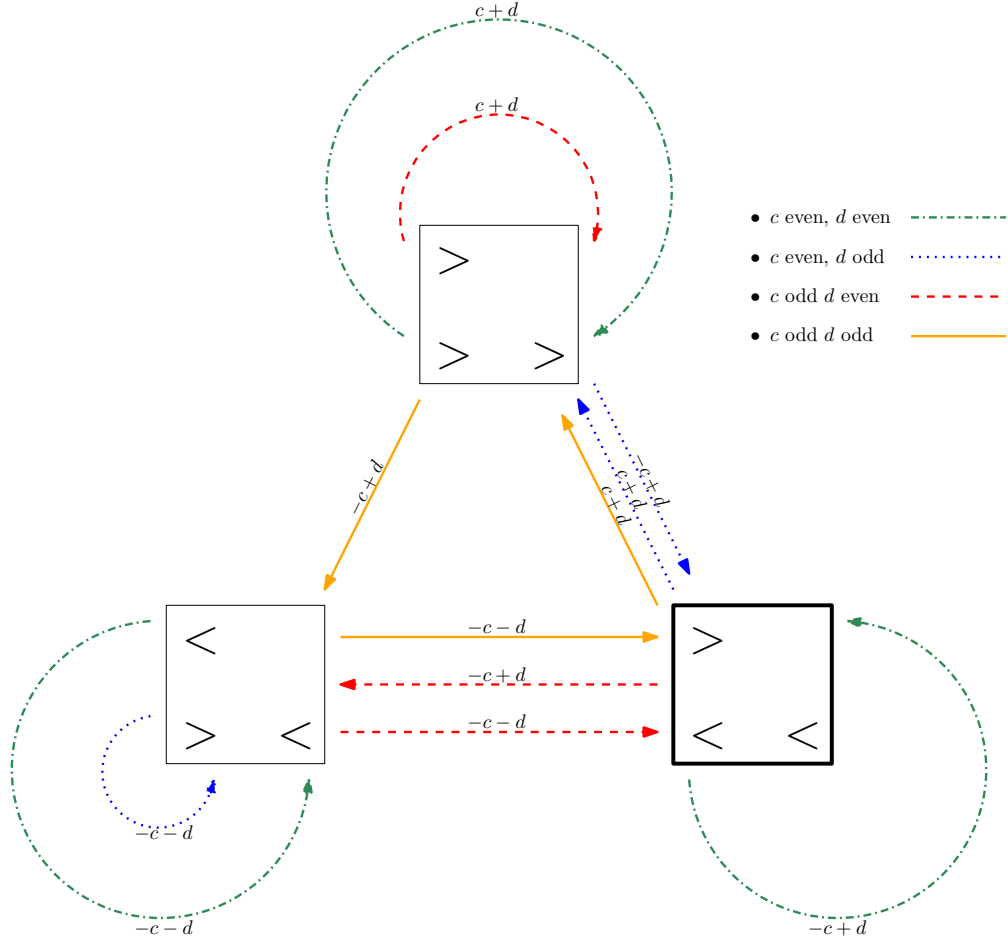}
\caption{Computation of the writhe: diagram showing the method described in Proposition \ref{prop:writhe}}
\label{fig:writhe}
\end{figure}

For a term $\widetilde{L}_i$ the signs $<$ and $>$ ("coming in" or "going out") in a box in Figure \ref{fig:writhe} show the orientation of the arcs at the outer rectangle of $\widetilde{L}_i$, that is, the arcs numbered with 1, 2, 3 and 4 in Figure \ref{fig:Lihullam}. The lower right box is marked, which means that this shows the orientations of the arcs at the outer rectangle of the first term $\widetilde{L}_1$. (We are only interested in the cases when the winding number $\w({\widetilde{L}})=0$ and since we assumed that the upper right arc always points outwards from the outer rectangle, this is the only possible orientation for $\widetilde{L}_1$.)

Four arrows start from each box, one for each parity option on $c_i$ and $d_i$. (The dashing shows the parity of $c_i$ and $d_i$.) Suppose that we know the orientation of the outer rectangle of $\widetilde{L}_i$ and the parity of $c_i$ and $d_i$. In the diagram there is a box corresponding to that orientation of the outer arcs and a unique arrow from that box corresponding to the parity. The endpoint of this arrow gives the orientation of the arcs entering the inner rectangle of $\widetilde{L}_i$ (that can be considered as the outer rectangle of $\widetilde{L}_{i+1}$). It is not hard to check that the number written on this arrow is the signed sum of crossings in $\widetilde{L}_i$. 


Starting from the lower right marked box, follow the arrow corresponding to the parity of $c_1$ and $d_1$ and consider the number written on it, that is, the signed sum of the crossings in $\widetilde{L}_1$. The endpoint of this arrow gives the orientation of the arcs entering the inner rectangle of $\widetilde{L}_1$, which is the same as the outer rectangle of $\widetilde{L}_2$. Now start again from this box (the endpoint of the previous arrow), follow the corresponding arrow according to the parity of $c_2$ and $d_2$, and add the number written on it to the previously counted signed number of crossings.
 
Then continue this procedure for every term $\widetilde{L}_i$ starting from the box which was the endpoint of the previous arrow, follow the arrow corresponding to the parity of $c_i$ and $d_i$, and add the numbers written onto it. At the end, we summed all the crossings of $\widetilde{L}$ with the proper signs, this way we got wr$(\widetilde{L})$.

\newpage
\begin{example}{Computing wr($K_\frac{153}{179}$)}\label{ex:wr}

The continued fraction expansion of the corresponding rational number is $\frac{153}{179}=[6\ 1\ 7\ 1\ 2]$. Now $\widetilde{L}$ consists of two terms: $\widetilde{L}_1$ and $\widetilde{L}_2$. The number of crossings in the first term is $c_1=a_3=7$ (odd), $d_1=a_2=1$ (odd), in the second term $c_2=a_5=2$ (even) and $d_2=a_4=1$ (odd).

Let us use the diagram in Figure \ref{fig:writhe}. We start from the lower right, marked box, since this always shows the orientation of the arcs at the outermost rectangle of the first term $\widetilde{L}_1$. Now $c_1$ and $d_1$ are both odd, so we have to follow the yellow (full) line. The number written on it is $c+d$ meaning that the signed sum of crossings in $\widetilde{L}_1$ is $c_1+d_1=7+1=8$.

The yellow (full) line points to the upper box of the diagram, this indicates the orientation of the arcs at the outer rectangle of $\widetilde{L}_2$ (which is the same as the inner rectangle of $\widetilde{L}_1$). Therefore, the next step, i.e. the calculation for the second term $\widetilde{L}_2$ starts from this upper box. Now $c_2$ is even and $d_2$ is odd, thus we follow the blue (pointed) arrow. The number on it is $-c+d$, so this orientation of the arcs implies that the signed sum of crossings in the second term $\widetilde{L}_2$ is $-c_2+d_2=-2+1=-1$.

We do not have any further terms, so the writhe is the sum of the signed sums calculated for the first two terms, that is, wr($K_\frac{153}{179})=8-1=7$.
\end{example}
\end{subsection}

\begin{proof}[Proof of Lemma \ref{lemma:HFKA=0}]
To see that the homology vanishes we give a bound on the Alexander grading $\tilde{A}=A(\mathcal{L}(L_{k,l}))$.
$$\tilde{A}=A(\mathcal{L}(L_{k,l}))=\frac{1}{2}\left(\tb(L_{k,l})-\rot(L_{k,l})+1\right).$$
Recall that $\tb=\mathrm{wr}-\frac{1}{2}\mathrm{\#cusps}$. The number of cusps in $\widetilde{L}$ is $2\sum\limits_{i=1}^m d_i$.

Now, we show that our parity assumptions imply wr$(\widetilde{L})=\sum\limits_{i=1}^m c_i+\sum\limits_{i=1}^m d_i$.
The conditions of Lemma \ref{lemma:HFKA=0} (same as of Theorem \ref{sajat}) include that in the continued fraction expansion of the corresponding rational number $a_2=d_1$ is odd and $a_{2i}=d_i$ is even for $i>1$. Using the algorithm written in Proposition \ref{prop:writhe}, $a_2=d_1$ is odd, which means that after starting at the lower right, marked box of the diagram in Figure \ref{fig:writhe} we follow either the yellow (full) or the blue (pointed) line. Notice that independently of $c_1$ the arrow points to the upper box of the diagram, and the number written on it is $c+d$ in both cases. This means that the signed sum of crossings in the first term $\widetilde{L}_1$ is $c_1+d_1$.
The condition that $a_{2i}=d_i$ is even for $i>1$ implies that starting from the top box of the diagram we always follow the green (pointed dashed) or the red (dashed) line. But independently of the $c_i$ values, these lines start and point to the same upper box of the diagram and the number written on them is $c+d$. Therefore, we got that for every term $\widetilde{L}_i$ the signed sum of crossings is $c_i+d_i$, implying wr$(\widetilde{L})=\sum\limits_{i=1}^m c_i+\sum\limits_{i=1}^m d_i$.

For the rotation number, we have $\rot(L_{k,l})=\frac{1}{2}(\mathrm{cusps}_d-\mathrm{cusps}_u)$. Our argument above shows that assuming the parity conditions of Theorem \ref{sajat}, on the diagram of Figure \ref{fig:writhe} we only go from the lower right box to the upper box. Both of these indicate an orientation of the arcs of the terms $\widetilde{L}_i$ so that there are always an equal number of up-cusps and down-cusps in every term (this number is $d_i$). Therefore, $\rot(L_{k,l})=0$.

$$\tilde{A}=A(\mathcal{L}(L_{k,l}))=\frac{1}{2}\left(\tb(L_{k,l})-\rot(L_{k,l})+1\right)=\frac{1}{2}\left(\sum\limits_{i=1}^m c_i+\sum\limits_{i=1}^m d_i-\sum\limits_{i=1}^n d_i+1\right)=\frac{1}{2}\left(\sum\limits_{i=1}^n c_i+1\right).$$
Applying Seifert's algorithm, we get a genus $g$ Seifert surface of $K_0$. Computing the Euler characteristics we get 
$$\chi=\sum\limits_{i=1}^m d_i+2-(\sum\limits_{i=1}^m c_i+\sum\limits_{i=1}^m d_i)=2-\sum\limits_{i=1}^m c_i.$$
$$g=\frac{1-\chi}{2}=\frac{\sum\limits_{i=1}^m c_i-1}{2}.$$
It is known that the Seifert genus $g_S\leq g$ bounds the largest Alexander grading with non-trivial knot Floer homology in $\widehat{HFK}(-L(a+b+1,1),K_0)$ \cite{N09}. In our case $g<\tilde{A}$, therefore $\widehat{\HFK}$ vanishes in Alexander grading $\tilde{A}=A(\mathcal{L}(L_{k,l}))$.
\end{proof}

Using the above methods we can tell from the continued fraction expansion of any rational number whether the proof of Theorem \ref{sajat} can be adapted for the corresponding two-bridge knot. This way we get further examples of Legendrian non-simple two-bridge knots. In Table \ref{tab:nullahom} we show a few of the infinitly many examples that are not included in the infinite family of Theorem \ref{sajat}. Here, we cannot always use the genus bound the way we did in the proof of Theorem \ref{sajat}. However, after using the above algorithms to compute the necessary data, we can prove that the $\widehat{\HFK}$ homology of the third term vanishes on the appropriate Alexander grading. This means that the same proof works to show they are Legendrian non-simple. We show the computations for such a knot in Example \ref{ex:nullahom}.

\begin{table}
$$\begin{array}{ccccc}
\textrm{Knot} & \textrm{Continued fraction} & \textrm{Alexander polynomial of third term}  & \tilde{A}\\
K_\frac{153}{179} & [6\ 1\ 7\ 1\ 2] & 4t^{-2} -12t^{-1} + 17-12t^{1} + 4t^2 & 3 \\
K_\frac{341}{399} & [6\ 1\ 7\ 3\ 2] & 8t^{-2} -27t^{-1} + 39 -27t^{1} + 8t^2 & 3 \\
K_\frac{529}{619} & [6\ 1\ 7\ 5\ 2] & 12t^{-2} -42t^{-1} + 61  -42t^{1} + 12t^2 & 3 \\
K_\frac{717}{839} & [6\ 1\ 7\ 7\ 2] & 16t^{-2}  -57t^{-1} + 83 -57t^{1} + 16t^2 & 3 \\
K_\frac{905}{1059} & [6\ 1\ 7\ 9\ 2] & 20t^{-2} -72t^{-1} + 105 -72t^{1} + 20t^2 & 3 \\
K_\frac{189}{221} & [6\ 1\ 9\ 1\ 2] & 5t^{-2} -15t^{-1} + 21 -15t^{1} + 5t^2 & 4 \\
K_\frac{425}{497} & [6\ 1\ 9\ 3\ 2] & 10t^{-2} -34t^{-1} + 49 -34t^{1} + 10t^2 & 4 \\
K_\frac{661}{773} & [6\ 1\ 9\ 5\ 2] & 15t^{-2} -53t^{-1} + 77 -53t^{1} + 15t^2 & 4 \\
K_\frac{897}{1049} & [6\ 1\ 9\ 7\ 2] & 20t^{-2} -72t^{-1} + 105 -72t^{1} + 20t^2 & 4 \\
K_\frac{1133}{1325} & [6\ 1\ 9\ 9\ 2] & 25t^{-2} -91t^{-1} + 133 -91t^{1} + 25t^2 & 4 \\
K_\frac{107}{127} & [6\ 2\ 1\ 5\ 1] & -3t^{-2} + 7t^{-1} -7 + 7t^{1} -3t^2 & -3 \\
K_\frac{139}{165} & [6\ 2\ 1\ 7\ 1] & -4t^{-2} + 9t^{-1} -9 + 9t^{1} -4t^2 & -4 \\
K_\frac{171}{203} & [6\ 2\ 1\ 9\ 1] & -5t^{-2} + 11t^{-1} -11 + 11t^{1} -5t^2 & -5 \\
K_\frac{293}{347} & [6\ 2\ 2\ 1\ 7] & 8t^{-2} -19t^{-1} + 23 -19t^{1} + 8t^2 & 3 \\
K_\frac{369}{437} & [6\ 2\ 2\ 1\ 9] & 10t^{-2} -24t^{-1} + 29 -24t^{1} + 10t^2 & 4 \\

\end{array}$$
\caption{Some examples of Legendrian non-simple two-bridge knots that are not included in Theorem \ref{sajat}. We can show that the $\widehat{\HFK}$ homology of the third term $(-L(a+b+1,1), K_0)$ vanishes in Alexander grading $\tilde{A}=A(\mathcal{L}(L_{k,l}))$, thus the proof of Theorem \ref{sajat} can be adapted.}
\label{tab:nullahom}
\end{table}

\begin{example}{Computations for $K_\frac{153}{179}$}\label{ex:nullahom}

We saw in Example \ref{ex:wr} that $\frac{153}{179}=[6\ 1\ 7\ 1\ 2]$, and thus $\widetilde{L}$ consists of two terms. The number of crossings is $c_1=a_3=7$, $d_1=a_2=1$, $c_2=a_5=2$ and $d_2=a_4=1$, so the condition in Theorem \ref{sajat} that $a_4$ is even fails. We can follow the proof of Theorem \ref{sajat}, but have to check that Lemma \ref{lemma:w=0} can be adapted in our case.

First, we show that $\w({\widetilde{L}})=0$.\\
Since $c_1=a_3=7$, $d_1=a_2=1$ are both odd, the permutation $\pi_1$ is $ab$ (see Figure \ref{fig:permutacio}). $c_2=a_5=2$ is even and $d_2=a_4=1$ is odd, thus $\pi_2$ is $a$. This means that $\pi_2\circ \pi_{1}=a b a$. According to the argument in (Section \ref{subsec:w=0}), this means that $\w({\widetilde{L}})=0$. 

To verify the fact that the $\widehat{\HFK}$ homology of the third term $(-L(a+b+1,1), K_0)$ in the distinguished triangle of knots (Figure \ref{fig:triangle}) vanishes in Alexander grading $\tilde{A}=A(\mathcal{L}(L_{k,l}))$ we need to compute $\tilde{A}$.
$$\tilde{A}=A(\mathcal{L}(L_{k,l}))=\frac{1}{2}\left(\tb(L_{k,l})-\rot(L_{k,l})+1\right).$$
In Example \ref{ex:wr} we checked that wr($K_\frac{153}{179})=8-1=7$. The number of cusps in $\widetilde{L}$ is $2(d_1+d_2)=2(1+1)=4$. Therefore, $\tb=\mathrm{wr}-\frac{1}{2}\mathrm{\#cusps}=7-2=5$.\\
The same argument as used in the proof of Lemma \ref{lemma:HFKA=0} shows that $\rot(L_{k,l})=0$.
$$\tilde{A}=A(\mathcal{L}(L_{k,l}))=\frac{1}{2}\left(5+1\right)=3.$$
The Alexander polynomial of $K_0$ is $4t^{-2} + -12t^{-1} + 17-12t^{1} + 4t^2$. Using Theorem \ref{thm:HFKkiszam}, we can check that in Alexander grading $\tilde{A}$ the $\widehat{\HFK}$ homology vanishes.

Note that although the homology vanishes at $\tilde{A}$, the proof of Lemma \ref{lemma:HFKA=0} does not work as simply as before in this case: after calculating $\tilde{A}$, the other computations are the same as in the proof of Lemma \ref{lemma:HFKA=0}. For the genus $g$ of a Seifert surface of $K_0$ we get that $g=\frac{1}{2}(7+2-1)=4$. But now $g \nless \tilde{A}$. Therefore, in this case it is not enough to compute this genus to see that the homology vanishes. Although the bound $g_S \leq \tilde{A}$ still holds for the Seifert genus $g_S$, we do not have a general method to compute $g_S$ directly from $\frac{p}{q}$.

\end{example}

\begin{remark}
The condition that the $\widehat{\HFK}$ homology of the third term $(-L(a+b+1,1), K_0)$ should vanish in Alexander grading $\tilde{A}=A(\mathcal{L}(L_{k,l}))$ is not necessary. Although in case of the rank of this homology group being $r>0$ the map $\widehat{F}_S$ is not an isomorphism, its kernel has rank at most $r$. Therefore, the number of distinct Legendrian realizations is at least $\lceil \frac{a_1}{4\cdot 2^r} \rceil$. Knowing this, we get even more examples of Legendrian non-simple two-brige knots. In Table \ref{tab:mashom} we give some examples of two-bridge knots such that the $\widehat{\HFK}$ homology of the third term is nontrivial, but  small enough compared to $a_1$. Thus, our previous argument can be used to prove they are Legendrian non-simple. In Proposition \ref{prop:computation} we show the computations for an example. 
\end{remark}

\begin{table}
$$ \begin{array}{cccccc}
\textrm{Knot} & \textrm{Continued fraction} & \textrm{Alexander polynomial of third term}  & \tilde{A} & r\\
K_\frac{14}{1825} & [130\ 2\ 1\ 3\ 1] & -2t^{-2} + 5t^{-1} -55t^{1} -2t^2 & -2 & 2 \\
K_\frac{243}{257} & [18\ 2\ 1\ 3\ 1] & -2t^{-2} + 5t^{-1} -5 + 5t^{1} -2t^2 & -2 & 2 \\
K_\frac{625}{661} & [18\ 2\ 1\ 3\ 3] & -2t^{-3} + 6t^{-2} -10t^{-1} + 13 -10t^{1} + 6t^{2} -2t^3 & -3 & 2 \\
K_\frac{279}{295} & [18\ 2\ 3\ 1\ 1] & -2t^{-2} + 6t^{-1} -7 + 6t^{1} -2t^2 & -2 & 2 \\
K_\frac{593}{627} & [18\ 2\ 3\ 1\ 3] & -2t^{-3} + 6t^{-2} -10t^{-1} + 13 -10t^{1} + 6t^{2} -2t^3 & -3 & 2 \\
K_\frac{677}{697} & [34\ 1\ 5\ 1\ 2] & 3t^{-2} -9t^{-1} + 13 -9t^{1} + 3t^2 & 2 & 3 \\
K_\frac{267}{275} & [34\ 2\ 1\ 1\ 1] & -t^{-2} + 3t^{-1} -3 + 3t^{1} -t^2 & -1 & 3 \\
K_\frac{601}{619} & [34\ 2\ 1\ 1\ 3] & -t^{-3} + 3t^{-2} -5t^{-1} + 7 -5t^{1} + 3t^{2} -t^3 & -2 & 3 \\
K_\frac{935}{963} & [34\ 2\ 1\ 1\ 5] & -t^{-4} + 3t^{-3} -5t^{-2} + 7t^{-1} -7 + 7t^{1} -5t^{2} + 3t^{3} -1t^4 & -3 & 3 \\
K_\frac{465}{479} & [34\ 4\ 1\ 1\ 1] & -t^{-3} + 3t^{-2} -3t^{-1} + 3 -3t^{1} + 3t^{2} -t^3 & -1 & 3 \\
K_\frac{1129}{1163} & [34\ 4\ 1\ 5\ 1] & -3t^{-3} + 7t^{-2} -7t^{-1} + 7 -7t^{1} + 7t^{2} -3t^3 & -3 & 3 \\
K_\frac{3055}{3147} & [34\ 4\ 1\ 5\ 3] & -3t^{-4} + 9t^{-3} -15t^{-2} + 19t^{-1} -19 + 19t^{1} -15t^{2} + 9t^{3} -3t^4 & -4 & 3 \\
K_\frac{1529}{1575} & [34\ 4\ 5\ 1\ 1] & -3t^{-3} + 9t^{-2} -11t^{-1} + 11 -11t^{1} + 9t^{2} -3t^3 & -3 & 3 \\
K_\frac{663}{683} & [34\ 6\ 1\ 1\ 1] & -t^{-4} + 3t^{-3} -3t^{-2} + 3t^{-1} -3 + 3t^{1} -3t^{2} + 3t^{3} -t^4 & -1 & 3 \\
\end{array}$$
\caption{Some further examples of Legendrian non-simple two-bridge knots that are not included in Theorem \ref{sajat} or in Table \ref{tab:nullahom}. Here, the $\widehat{\HFK}$ homology of the third term $(-L(a+b+1,1), K_0)$ does not vanish in Alexander grading $\tilde{A}=A(\mathcal{L}(L_{k,l}))$, but has rank $r$.}
\label{tab:mashom}
\end{table}

\begin{proof}[Proof of Proposition \ref{prop:computation}]
The corresponding rational number has the continued fraction expansion $\frac{14}{1825}=[130\ 2\ 1\ 3\ 1]$. We follow the proof of Theorem \ref{sajat} until the making of the distinguished triangle in Figure \ref{fig:triangle}.
Now the number of crossings in $\widetilde{L}$ is $c_1=a_3=1$, $d_1=a_2=2$, $c_2=a_5=1$ and $d_2=a_4=3$. Using the method written in Section \ref{subsec:w=0}, we can check that $\pi_2 \circ \pi_1$ is $bab=aba$. This means, that the winding number $\w({\widetilde{L}})=0$.

To compute the $\widehat{HFK}$ homology of the third term in the triangle we follow Section \ref{subsec:HFK=0}. We need the Alexander grading $\tilde{A}$ of the Legendrian invariant $\mathcal{L}(L_{k,l})$. According to \cite{OS10}, $2\tilde{A}=\tb(L_{k,l})-\rot(L_{k,l})+1$ and $\rot(L_{k,l})=\frac{1}{2}(\mathrm{cusps}_d-\mathrm{cusps}_u)$. In our case there are two terms: $\widetilde{L}_1$ and $\widetilde{L}_2$. In $\widetilde{L}_1$ the number of up- and down-cusps are equal. However, the orientation induces that in $\widetilde{L}_2$ there are only up-cusps which means that $\rot(L_{k,l})=-d_2=-3$.\\
$\tb=\mathrm{wr}-\frac{1}{2}\mathrm{\#cusps}$. The number of cusps is $2\sum\limits_{i=1}^2 d_i=2\cdot (2+3)=10$. Using the method written in Proposition \ref{prop:writhe} we get that $\mathrm{wr}=-c_1+d_1-c_2-d_2=-3$. Therefore, $\tb=-3-5=-8$ and $\tilde{A}=\frac{1}{2}(-8-(-3)+1)=-2$.

The Alexander polynomial of $K_0$ in Figure \ref{fig:triangle} is now $\Delta_{K_0}(t)=-2t^{-2} + 5t^{-1} -5+5t^{1} -2t^2$.
By Theorem \ref{thm:HFKkiszam}, $\widehat{HFK}_{\tilde{A},*}(K_0)\cong \mathbb{F}_2^{2}$, that is, $r=2$.

Although the map $\widehat{F}_S$ is not an isomorphism, its kernel has rank at most 2. This means that there are at least $\lceil \frac{130}{4\cdot 2^2} \rceil=9$ distinct Legendrian realizations.
\end{proof}

\bibliographystyle{plain}
\nocite*
\bibliography{biblio}
\end{document}